\colorlet{cite}{LimeGreen!50!Green}
\tikzset{ 
  baseline=-2.3pt,
  text height=1.5ex, text depth=0.25ex,
  >=stealth,
  node distance=2cm,
  mid/.style={fill=white,inner sep=2.5pt},
}
\newtheoremstyle{mydef}
  {}		
  {}		
  {}		
  {}		
  {\scshape}	
  {. }		
  { }		
  {\thmname{#1}\thmnumber{ #2}\thmnote{ #3}}	
\theoremstyle{plain}	
\newtheorem{theorem}{Theorem} 
\newtheorem{lemma}[theorem]{Lemma} 
\newtheorem{proposition}[theorem]{Proposition}
\newtheorem*{theorem*}{Theorem}
\theoremstyle{mydef} 
\newtheorem{definition}[theorem]{Definition}
\newtheorem*{conjecture*}{Conjecture}
\theoremstyle{remark}
\newtheorem{remark}[theorem]{Remark}
\newtheorem{notation}[theorem]{Notation}
\DeclareMathOperator{\im}{im}
\DeclareMathOperator{\HH}{\mathrm{H}}
\DeclareMathOperator{\rank}{rank}
\DeclareMathOperator{\End}{End}
\DeclareMathOperator{\Ext}{Ext}
\DeclareMathOperator{\Hom}{Hom}
\newtheorem*{proposition*}{Proposition}
\newtheorem*{lemma*}{Lemma}
\newtheorem*{corollary*}{Corollary}
\theoremstyle{definition}
\theoremstyle{remark}
\newtheorem*{claim*}{Claim}
\DeclareMathOperator{\Pic}{Pic}
\newcommand{\ce}{\mathrel{\mathop:}=}
\newcommand*{\shom}{\mathscr{H}\text{\kern -2pt {\it{om}}}\,}
\newcommand*{\ext}{\mathscr{E}\text{\kern -1pt {\it{xt}}}\,}
\author{E. Ballico, E. Gasparim, F. Rubilar, B. Suzuki}
\address{EB: Dept. Mathematics, University of Trento, I-38050 Povo, Italy.\newline
EG, FR, BS:	Depto. Matem\'aticas, Universidad Cat\'olica del Norte,  Chile. \newline
{emails: ballico@science.unitn.it, etgasparim@gmail.com, francisco.rubilar@ucn.cl, obrunosuzuki@gmail.com}}
\thanks{Corresponding author: B. Suzuki, obrunosuzuki@gmail.com}
\title[Kuranishi map  for bundles on  certain products of curves]{The Kuranishi map for  vector bundles  \\on certain products of curves}
\begin{document}

\maketitle 

\begin{abstract}
We describe  deformations  of vector bundles on  surfaces that are 
a product of two  smooth projective curves.  We explicitly describe the Kuranishi map around unstable 
vector bundles  and  compare the homologies of the Kuranishi spaces of stable and unstable deformations. 

\end{abstract}
\vspace{5mm}

\noindent Keywords: Deformation theory, Kuranishi map, stable bundle,  homology,
 Kuranishi deformation space.

\vspace{3mm}

\noindent MSC: Primary  32G08; Secondary  32G05, 14J60.

\section{Moduli}

 We study   deformation spaces of vector bundles over  complex surfaces   that are 
products of  two  smooth projective curves. We take the  product 
$\Sigma\ce \Sigma_1\times \Sigma_2$ of general curves of genera
 $1< g_1 <g_2$, so that   the surface $\Sigma$  is of general type.
Kuranishi deformations of bundles on  such  surfaces are particularly manageable 
 because they can be expressed in terms of  deformations  of bundles on each of the individual curves.
In fact, we assume throughout  that $\Pic(\Sigma)$ is generated by the box products of 
pullbacks of line bundles from $\Sigma_i$. 
This property, called Pic-independence  \cite{F}, 
 holds for a very general choice of $(X,Y)$  in 
$\mathcal M_{g_1} \times \mathcal M_{g_2}$,
i.e. for all $(X,Y)$ outside countably many proper subvarieties of $\mathcal{M}_{g_1}\times \mathcal{M} _{g_2}$, 
see \cite[Prop.\thinspace 3.19]{I} and \cite[Th.\thinspace 11.5.1]{BL}.

Fix the smooth type of a rank 2 vector bundle $\mathcal E$ on $\Sigma_1\times \Sigma_2$, or equivalently, its
 Chern classes. Here we will consider only the case of trivial determinant, hence $c_1(E)=0$. 
 For each holomorphic bundle $E$ with
 the smooth type of $\mathcal E$, we discuss the deformation space of $E$. 
 
The Kodaira--Spencer theory of deformations implies that any  Zariski
	tangent vector of the local moduli space of holomorphic vector bundles at $E$
	can be
	interpreted as an element of $\HH^1(\Sigma,\End E)$, where $\End E$ is the 
	endomorphism bundle of $E$, see  \cite{CWY}.
The Kuranishi map associated to $E$ can be written as
$$
\Psi_{E}\colon \HH^1(\Sigma,\End E) \rightarrow \HH^2(\Sigma,\End E )
$$
$$w \mapsto [w,w]$$
and has the following properties \cite[Prop.\thinspace 6.4.3]{DK}:

 (i) $\Psi_E$ and its derivative both
vanish at $0,$ and a versal deformation
of $E$ is parametrized by the complex space
$\Psi^{-1}_E(0)$.

 (ii) The two-jet of $\Psi_E$ at the origin is given by 
the combination of cup product and bracket:
\begin{equation*}
	 \HH^1(\End E) \otimes \HH^1(\End E)
 \rightarrow\HH^2(\End_0 E).
\end{equation*}
\label{DKprop}
\begin{definition}\label{def}
We call the germ of $\Psi_{E}^{-1}(0)$ the {\bf Kuranishi deformation space} of $E$ 
 and denote it by  ${\mathbb K}(\Sigma, E) $. 
We denote by  ${\mathbb K}^s(\Sigma, E) $
 the subset of ${\mathbb K}(\Sigma, E) $ consisting of  stable rank 2 bundles.
 \end{definition}

\noindent We will omit the first entry $\Sigma$ when it is clear from the context, 
and  add a subscript ${\mathbb K}_{c_2}(\Sigma, E) $ and  ${\mathbb K}^s_{c_2}(\Sigma, E) $ respectively
when we want to fix the value of the second Chern class.

\begin{remark}For a fixed choice of $E$ it may happen that ${\mathbb K}^s(\Sigma, E) $ is empty (see App.\thinspace\ref{unstable}). 
In such a case we will see in Secs.\thinspace\ref{Kur-split} and \ref{nonsplit} that the Kuranishi map 
reduces to a single component, labelled $\iota)$ in both cases. Furthermore, in such a case, 
it then also happens
that the target vanishes, so that the Kuranishi deformation space of such a bundle is 
the entire vector space $\HH^1(\End E)$. 
%
\end{remark}

We prove:

\begin{theorem*}[\ref{main}] Let $\Sigma_1$ and $\Sigma_2$ be Pic-independent smooth projective curves.
Let $E$ be a bundle on $\Sigma_1\times \Sigma_2$ with trivial determinant
and with second Chern number  $c_2> 8g_1g_2$. 
Then, for $0<q<c_2$
we have 
$$H_q({\mathbb K}_{c_2}
(\Sigma, E), {\mathbb K}^s_{c_2}(\Sigma, E))=0.$$
\end{theorem*}

\section{Background}

We recall the basic concept of stability  used here. 
\begin{definition}\label{degreeofL}
	Let $X$ be a compact K\"ahler complex manifold and $E \to X$ a holomorphic  vector bundle. Then
	 \label{deg2} if $\dim{X} = n > 1$, the \textit{degree} of $E$ is   
		\[
		\deg{E}=\int_X c_1(E) \wedge \omega^{n-1},
		\]
		where $\omega \in\HH^2(X, \mathbb{Z})$. The form $\omega$ is called a \textit{polarization} of $E$. 
	The definition of degree depends on the chosen polarization.
	The \textit{slope} of $E $ is
	$
	\mu(E) = \deg(E)/\rank(E).
	$
	 Then $E$ is
 \textit{slope stable} (resp. \textit{semistable}) if $\mu(E') < \mu(E)$ (resp. $\mu(E') \leq \mu(E)$) for every proper subbundle $0 \to E' \to E.$
\end{definition}

The most natural choice for the product of curves is to take a polarization such that 
$\omega$ is a class of type $(1,1)$, making the contributions of $\Sigma_1$ and $\Sigma_2$ 
well balanced.

\begin{remark} 
Moduli spaces of vector bundles  correspond  to moduli spaces of instantons 
via the 
celebrated
 Kobayashi--Hitchin correspondence.
In 1978  Atiyah and Jones  \cite{AJ}  conjectured that 
the inclusion of the moduli space of instantons 
of charge $k$ into the moduli space of all connections 
induces isomorphisms  in
homology and homotopy  in  degrees less than $k/2$.
 The conjecture was proved for $\mathrm{SU}(2)$ instantons on $S^4$   \cite{BHMM},
 for $\mathrm{SU}(n)$ ins\-tantons $S^4$  \cite{T}, 
  for ruled surfaces  \cite{HM},  and  for rational surfaces   \cite{Ga};
it  remains open in all other cases.
Other results on the stable topology of moduli spaces include \cite{Ta,CW, Sa}.
Our result here contributes to this type of topological  questions, by making  the technical step of verifying 
the topological implications of any particular choice of stability unnecessary.
\end{remark}

We will apply Kirwan's techniques for removing subvarieties of high 
codimension,  while preserving isomorphisms in homology through a range.

In \cite[Cor.\thinspace 6.4]{KI} Kirwan  proved the following result for a
 quasi-projective variety $X$. Let  $\mu$
be a non-negative integer such that every $x_0 \in X$ has a 
neighbourhood in $X$ which is analytically isomorphic to an open subset of 
$$\{x \in \mathbb{C}^N | f_1(x)= \cdots = f_M(x)=0\}$$
for some integers $N,$ $M,$ and holomorphic functions 
$f_i$ depending on $x_0$ with $M \leq \mu.$ 
If $Y$ is a closed subvariety of codimension
$k$  in $X,$ then for $ q < k-\mu,$
\begin{equation}\HH_q(X, X-Y) =0= \HH^q(X,X-Y).
\label{kirwan}\end{equation}
The same result applies, with an identical proof, with $X$ a complex 
space instead of a quasi-projective variety, and we shall use the complex
version in our calculations.

\section{The Kuranishi map around a split bundle }
\label{Kur-split}

 Consider the surface
 $\Sigma= \Sigma_1 \times \Sigma_2$  and let $L = L_1 \boxtimes L_2$ be a line bundle over $\Sigma$ of type $(m,n)$.
In this section we study the Kuranishi map near a 
split bundle $E= L \oplus L^{-1}.$ We carry out the study for the case of $c _1=0$, other cases are just notationally 
heavier, but do not bring any real extra difficulty. 
 Our aim is to describe explicitly points whose Kuranishi deformation space that have obstructed deformation 
theory.  At such points we investigate the nature of the 
singularity in comparison to the dimension of the set of stable bundles.

Given the splitting $E =L \oplus L^{-1}$, we have that the Kuranishi space at $E$
is contained in 
\[ \HH^1(\Sigma, \End E)= \HH^1(\Sigma, L^2) \oplus \HH^1(\Sigma,\mathcal{O}) \oplus \HH^1(\Sigma,\mathcal{O})  \oplus \HH^1(\Sigma,L^{-2}).\]
We identify the directions as:

\begin{notation} $T_u= \HH^1(\Sigma, L^2),$ 
$T_o =  \HH^1(\Sigma,{\mathcal O})$ and 
$T_s = \HH^1(\Sigma,L^{-2}).$
\end{notation}
Observe  that $T_u $ and $T_o$ contain unstable deformations 
of $E$ whereas $T_s$ is the direction of stable deformations. 
We consider the components of the Kuranishi map at $E$:  

\begin{enumerate}
	\item[ $\iota)$] $ \HH^1(\Sigma,L^2) \otimes \HH^1(\Sigma,\mathcal{O})  \to \HH^2(\Sigma,L^2)$,
	\item[$\iota\iota)$] $\HH^1(\Sigma,L^2)\otimes \HH^1(\Sigma,L^{-2}) \to \HH^2(\Sigma,\mathcal{O})$, 
	\item[$\iota\iota\iota)$] $\HH^1(\Sigma,\mathcal{O})\otimes \HH^1(\Sigma,L^{-2}) \to \HH^2(\Sigma,L^{-2})$.
\end{enumerate}

We could also  list 
  the component $\HH^1(\Sigma, \mathcal{O}) \otimes \HH^1(\Sigma, \mathcal{O}) \to \HH^2(\Sigma, \mathcal{O})$, but it  is not needed
for our analysis, since  $\HH^2(\Sigma, \mathcal{O})$ poses a bounded number of obstructions and
 $ \HH^1(\Sigma, \mathcal{O})$ does not depend on the choice of the point $E$ 
and accordingly  does not play an interesting role  for this task.
We will use the local study  to estimate the codimension of the set of unstable bundles.
This section serves as a warm-up for the more general calculation that will be carried out
in Sec.\thinspace\ref{Kur-nonsplit} around nonfiltrable bundles.

Observe that the Kuranishi map around any bundle $E$ will have another  component with target
space $\HH^2(\Sigma, \mathcal O)$. We may regard equations 
coming from this component as unavoidable equations. Thus, in a sense these re\-present a minimal number
of equations that would be present overall in case we also  allowed the complex structure of the base space to vary. 
Our choice here is to fix the complex structure of the surface and vary only that of  vector bundles,
disregarding this component. 

\begin{proposition}\label{split}Assume $c_2>8g_1g_2$.
Then removing the set of split points does not change the homology of the Kuranishi space 
$\mathbb K_{c_2}(\Sigma, E)$
in dimension smaller than $c_2$. 
\end{proposition}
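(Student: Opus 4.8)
The plan is to apply \Cref{kirwan} with $X = \mathfrak M_{c_2}(X)$, the irreducible component of the coarse moduli space of the stack containing stable bundles (with unstable components removed, per \Cref{nounstbun}), and $Y$ the locus of split points, i.e.\ points $[E]$ with $E \cong L \oplus L^{-1}$ for some line bundle $L$ of type $(m,n)$. To invoke Kirwan's theorem I need two ingredients: first, a uniform bound $\mu$ on the number of local equations cutting out the moduli space near any point, and second, a lower bound on the codimension $k$ of the split locus $Y$, so that the conclusion $\HH_q(X, X-Y) = 0$ holds for $q < k - \mu$; the goal is to make $k - \mu \geq c_2$.

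First I would estimate $\mu$. By \cite[Prop.\thinspace 6.4.3]{DK} (the Kuranishi model recalled above), a neighbourhood of $[E]$ in the moduli space is cut out inside $\HH^1(\End_0 E)$ by the Kuranishi map $\kappa$ with values in $\HH^2(\End_0 E)$; hence the number $M$ of local equations is at most $h^2(\End_0 E)$. At a split point $E = L \oplus L^{-1}$ over $X = \Sigma_1 \times \Sigma_2$, we have $\End_0 E = L^2 \oplus \mathcal O \oplus L^{-2}$, so $h^2(\End_0 E) = h^2(X, L^2) + h^2(X,\mathcal O) + h^2(X, L^{-2})$. Using the K\"unneth formula on $X = \Sigma_1 \times \Sigma_2$ together with Serre duality and \Cref{h1}, the terms $\HH^2(X, L^2)$ and $\HH^2(X, L^{-2})$ vanish once $|m|, |n|$ are large (which is forced for $c_2 \gg 0$, since $c_2$ is a fixed positive multiple of $-mn$ up to lower order, as in the toy example), leaving only the bounded contribution $h^2(X, \mathcal O) = g_1 g_2$, a constant independent of $E$. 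So $\mu$ can be taken to be this constant, independent of $c_2$; as noted above, the $\HH^2(X,\mathcal O)$-component gives the "unavoidable equations" and that is all that survives.

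Next I would bound the codimension $k$ of the split locus. A split point is determined by the choice of the summand line bundle $L = L_1 \boxtimes L_2$; by Pic-independence (\Cref{PicInd}) of $\Sigma_1, \Sigma_2$, the line bundles of fixed type $(m,n)$ form a family of dimension $g_1 + g_2$ (the product of the two Jacobians), so $\dim Y \leq g_1 + g_2 + O(1)$, a constant. On the other hand, $\dim \mathfrak M_{c_2}(X) = 8c_2 - 3(1 - b_1 + b_2^-)$ by the formula recalled from \cite{BB}, \cite[Eq.\thinspace 4.2.21]{DK}. Hence the codimension is $k = \dim \mathfrak M_{c_2}(X) - \dim Y \geq 8c_2 - C$ for a constant $C$ depending only on $\Sigma_1, \Sigma_2$. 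Therefore $k - \mu \geq 8c_2 - C - g_1 g_2 \geq c_2$ for $c_2 \gg 0$, and \Cref{kirwan} gives $\HH_q(\mathfrak M_{c_2}(X), \mathfrak M_{c_2}(X) - Y) = 0$ for all $q < c_2$, which by the long exact sequence of the pair is exactly the statement that removing the split points does not change the homology in degrees below $c_2$.

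The main obstacle will be the verification that the local structure of the coarse moduli space of the stack really does satisfy the hypothesis of \Cref{kirwan} with the claimed $\mu$ — i.e.\ that near \emph{every} point (not merely the split ones) the space is analytically a zero-locus of at most $\mu = g_1 g_2$ holomorphic functions. Away from split and other unstable loci this follows from the Kuranishi model together with the vanishing $\HH^2(\End_0 E) \cong \HH^2(X, \mathcal O)$ for $E$ stable with the relevant Chern classes; but one must be careful near the unstable strata that remain inside the component (filtrable non-split unstable bundles), where $\HH^2(\End_0 E)$ could a priori be larger. Here I would argue, as in \Cref{unstable} and the toy example, that for $c_2 \gg 0$ the extra cohomology $\HH^2(X, L^{\pm 2})$ (or its analogue for the destabilizing sub-line-bundle) still vanishes because the destabilizing degree is forced to be large, so the bound $\mu = g_1 g_2$ persists; making this uniform over all of $\mathfrak M_{c_2}(X)$ is the delicate point. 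A secondary subtlety is the passage from the stack to its coarse space and the possible non-Hausdorff quotient in part (iii) of the Kuranishi theorem, which requires noting that the non-reduced or non-separated phenomena occur only along loci of the expected (large) codimension and so do not affect the homological comparison in the stated range.
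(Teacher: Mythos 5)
Your reduction of Kirwan's constant $\mu$ to $h^2(X,\mathcal O)=g_1g_2$ is the step that fails, and it fails exactly at the points the proposition is about. For a split bundle $E=L\oplus L^{-1}$ with $L$ of type $(m,n)$ one has $c_2=-2mn$, so $c_2\gg 0$ only forces the product $m(-n)$ to be large, not both factors: the split locus inside $\mathfrak M_{c_2}$ contains, for instance, the bundles with $n=-1$ and $m=c_2/2$. By K\"unneth, $\HH^2(X,L^{-2})\cong \HH^1(\Sigma_1,L_1^{-2})\otimes\HH^1(\Sigma_2,L_2^{-2})$, and by Serre duality $\HH^1(\Sigma_2,L_2^{-2})\cong\HH^0(\Sigma_2,K_{\Sigma_2}\otimes L_2^{2})^\vee$, which does not vanish for bounded $|n|$ (e.g.\ $h^1(\Sigma_2,L_2^{-2})\ge g_2-3>0$ when $n=-1$ and $g_2\ge 4$). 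Hence at such split points the number of local Kuranishi equations is at least $h^1(\Sigma_1,L_1^{-2})\,h^1(\Sigma_2,L_2^{-2})\ge (2m+g_1-1)(g_2-3)$, which grows linearly with $c_2$; no constant $\mu$ is available, and since this quantity can even exceed the (complex) codimension of the split locus, which is of order $4c_2$ (note also that $8c_2-3(1-b_1+b_2^-)$ is the \emph{real} dimension), your dimension count does not even guarantee $k-\mu>0$, let alone $k-\mu\ge c_2$. Your fallback remark that ``the destabilizing degree is forced to be large'' is precisely the false point: destabilization only gives $\alpha m+\beta n\ge 0$ for the chosen polarization, which allows $n$ to stay bounded while $m\to\infty$.

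The paper's proof is built to get around exactly this. It does not seek a uniform bound by $h^2(X,\mathcal O)$: it accepts that the component $\iota\iota\iota$, namely $T_o\otimes T_s\to\HH^2(X,L^{-2})$, contributes a number of equations growing with $m$, and instead works inside the local Kuranishi model, comparing the codimension in which the unstable directions $T_o$ sit with the number of equations cutting out the local model there. After factoring out the common tensor factor $\HH^1(\Sigma_1,L_1^{-2})$ and applying Riemann--Roch on $\Sigma_2$, it obtains (codimension of $T_o$) minus (number of defining equations) equal to $h^1(\Sigma_1,L_1^{-2})\bigl(h^0(\Sigma_2,L_2^{-2})-h^1(\Sigma_2,L_2^{-2})\bigr)=(2m-g_1+1)(-2n-g_2+1)>c_2$, and only then invokes Theorem~\ref{kirwan}. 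So the estimate Kirwan's theorem needs has to be extracted from the structure of the quadratic part of the Kuranishi map --- identifying which directions pair nontrivially into $\HH^2(X,L^{-2})$ --- rather than from a global count of $\dim\mathfrak M_{c_2}$ minus the dimension of the split locus with a constant $\mu$; that finer local analysis is the idea missing from your proposal.
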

	
\begin{proof} Recall that in this case $c_2= -2mn$. We assume that $c_2>8g_1g_2$.

 $\iota)$ We claim that  $\HH^2(\Sigma,L^2)=0$
	for large $m$ and hence the first component can 
	be ignored, since it will produce no equations for the local model at $E$. In fact, 
$\HH^2(\Sigma,L^2)=$  
	$$ \HH^2(\Sigma_1,L_1) \otimes \HH^0(\Sigma_2,L_2)
	\oplus \HH^1(\Sigma_1,L_1) \otimes \HH^1(\Sigma_2,L_2) 
	\oplus \HH^0(\Sigma_1,L_1) \otimes \HH^2(\Sigma_2,L_2) 
	$$
	and $\HH^2(\Sigma_1, L_1)=\HH^2(\Sigma_2, L_2)=0$. 
	Furthermore, if $m=c_1(L) > 2g(X)-2$,
	then 
	$
	\HH^1(X, L) = 0$,
producing no obstructions for large $c_2$.
\vspace{3mm}	

$\iota\iota)$ Here, ${\mathcal O}$ is fixed by the choice of $\Sigma$ and this is 
	precisely the part that poses $h^2(\Sigma, \mathcal O)$ obstructions.
	These are unavoidable, but are a bounded number depending only on the topology of 
	$\Sigma$ and small once we compare to the deformation space  with large $c_2$.
\vspace{3mm}
	
 $\iota\iota\iota)$ This component $T_o \otimes T_s \rightarrow \HH^2(\Sigma,L^{-2})$  can  give a number of obstructions that grows with $m$.
Here we wish to remove the 
	set $T_o$ which contains the directions of unstable deformations,  in the case when obstructions are present. 
	Since
	\begin{align*}
	T_s =& \left( \HH^1(\Sigma_1, L_1^{-2}) \otimes \HH^0(\Sigma_2,L_2^{-2}) \right) \oplus \left( 
	\HH^0(\Sigma_1, L_1^{-2}) \otimes \HH^1(\Sigma_2,L_2^{-2}) \right), 	\\
	T_o =& \left( \HH^0(\Sigma_1,{\mathcal O}) \otimes \HH^1(\Sigma_2, {\mathcal O}) \right) \oplus \left( 
	\HH^1(\Sigma_1,{\mathcal O}) \otimes \HH^0(\Sigma_2, {\mathcal O}) \right),
	\end{align*}
	and $\HH^0(\Sigma_1,{\mathcal O}) = \HH^0(\Sigma_2, {\mathcal O}) = \mathbb{ C}\,\, \textup{and}
	\HH^0(\Sigma_1,L_1^{-2})=0\textup{ for large $m$,}$ we have 
$$
		T_s  = \HH^1(\Sigma_1,L_1^{-2}) \otimes \HH^0(\Sigma_2,L_2^{-2}),\quad \text{and}\quad
		T_o  = \HH^1(\Sigma_1,\mathcal{O}) \oplus \HH^1(\Sigma_2,\mathcal{O}).
$$
	Therefore 
	\begin{align*}
		T_o \otimes T_s 
		=& \left( \HH^1(\Sigma_1,\mathcal{O}) \oplus \HH^1(\Sigma_2,\mathcal{O})\right) \otimes \left(\HH^1(\Sigma_1,L_1^{-2}) \otimes \HH^0(\Sigma_2,L_2^{-2}) \right) \\
		=& \HH^1(\Sigma_1,\mathcal{O}) \otimes \HH^1(\Sigma_1,L_1^{-2}) \otimes \HH^0(\Sigma_2,L_2^{-2})  \\
		& \oplus 
		\HH^1(\Sigma_2,\mathcal{O}) \otimes \HH^1(\Sigma_1,L_1^{-2}) \otimes \HH^0(\Sigma_2,L_2^{-2})\\
		\simeq 
		& \HH^1(\Sigma_1,L_1^{-2}) \otimes  \HH^1(\Sigma_1,\mathcal{O}) \otimes \HH^0(\Sigma_2,L_2^{-2})  \\
		& \oplus 
		\HH^1(\Sigma_1,L_1^{-2}) \otimes\HH^1(\Sigma_2,\mathcal{O}) \otimes  \HH^0(\Sigma_2,L_2^{-2})
		,
	\end{align*}
	where we may change the order, since the product is taken over $\mathbb C$.\\
	Furthermore, 
	$
	\HH^2(\Sigma,L^{-2}) = \HH^1(\Sigma_1,L_1^{-2}) \otimes \HH^1(\Sigma_2, L_2^{-2})$,	so the obstruction map can be written as 	
	\begin{multline}
		\left(\HH^1(\Sigma_1, L_1^{-2}) \otimes \HH^0(\Sigma_2,L_2^{-2})  
		\otimes \HH^1(\Sigma_1,{\mathcal O}) \right)
		\oplus \\
		\left( \HH^1(\Sigma_1, L_1^{-2}) \otimes \HH^0(\Sigma_2,L_2^{-2})  
		\otimes \HH^1(\Sigma_2,{\mathcal O}) \right)\\
		\rightarrow
		\HH^1(\Sigma_1, L_1^{-2}) \otimes \HH^1(\Sigma_2,L_2^{-2}) .\label{obsmap}
	\end{multline}

	The first coordinates map by the identity, therefore it suffices to study the 
	zero set of the map 
	$$ \left( \HH^0(\Sigma_2,L_2^{-2})  
	\otimes \HH^1(\Sigma_1,{\mathcal O}) \right)
	\oplus \left(
	\HH^0(\Sigma_2,L_2^{-2})  
	\otimes \HH^1(\Sigma_2,{\mathcal O}) \right)
	\rightarrow
	\HH^1(\Sigma_2,L_2^{-2}) , $$
	that is,
	\begin{equation}
	 \HH^0(\Sigma_2,L_2^{-2})  
	\otimes \left(\HH^1(\Sigma_1,{\mathcal O}) 
	\oplus 
	\HH^1(\Sigma_2,{\mathcal O}) \right)
	\rightarrow
	\HH^1(\Sigma_2,L_2^{-2}).\label{zeroset}
	\end{equation}	

	\begin{claim*}
			The topology of $\mathbb K_{c_2}(\Sigma, E) \cap (T_o \oplus T_s)$ remains unchanged upon removing $T_o$ up to real dimension $c_2$.
\end{claim*}
	 
	We set $V_1(m)\ce \HH^1(\Sigma_1,L_1^{-2}),
	V_0(n) \ce \HH^0(\Sigma_2,L_2^{-2})$ and  $$A\ce T_{o} = \HH^1(\Sigma_1, {\mathcal O}) \oplus \HH^1(\Sigma_2, {\mathcal O})$$
	and we want to identify the zero set  of the Kuranishi map (\ref{obsmap}). Setting $K = V_0(n) \otimes A$ as the zero set of the quadratic  cup-product
	map (\ref{zeroset}), the zero set of the Kuranishi map (\ref{obsmap}) in 
	$V_1(m) \otimes V_0(n) \otimes A$ becomes 
	\[
	K^{\nu_1}= K \times_{A} K \times_A \cdots \times_A K 
	\]
	$\nu_1=\dim_{\mathbb{C}}{V_1(m)}$  times.
	
	Let $K_a$ be the fiber over a point $a\in  A.$ We want to 
	remove $A,$ that is, the zero section of the fiber product $K.$ 

	Note that	if $K_a = 0$ for some $a,$ then $T_o$ is an irreducible component
		of $\mathbb K_{c_2}(\Sigma, E) \cap  (T_o \oplus T_s)$
		and in that case $\mathbb K_{c_2}(\Sigma, E)$ has a totally unstable component.

	We have avoided this case 
	by demanding in our hypothesis 
	that we work in the complement of unstable components. Therefore, in our case
	$K_a$  is never zero.

   Now, observe that in Eq.\thinspace(\ref{zeroset}) we have that $T_o$ appears in codimension 
    $h^0(\Sigma_2,L_2^{-2}) $ and the target gives us $h^1(\Sigma_2,L_2^{-2})$ equations. So, to have a meaningful bound here, 
all we need is that the number of equations be less than the codimension. But, by Riemann--Roch we have:
 $$h^0(\Sigma_2,L_2^{-2}) - h^1(\Sigma_2,L_2^{-2})=  -2n - g_2 + 1>0.$$
 
 Since $K_a$ is never zero, we can then promote this to the bounds we require in Eq.\thinspace (\ref{obsmap}), using the fact that 
  $\nu_1 = 2m-g_1+1.$ So, we have that $T_o$ appears in the Kuranishi map (\ref{obsmap}) in codimension 
   $\nu_1  \mbox{dim}K_a=\nu_1\nu_0=(2m-g_1+1)(-2n-g_2+1)$ and the corresponding neighbourhood 
   is defined by $h^1(\Sigma_1, L_1^{-2}) h^1(\Sigma_2,L_2^{-2})$ equations.  
   
 Thus, we obtained the following bound on codimension of $T_o$ minus number of defining equations, to apply estimate (\ref{kirwan}):
 $$h^1(\Sigma_1, L_1^{-2})(h^0(\Sigma_2,L_2^{-2}) - h^1(\Sigma_2,L_2^{-2}))=  (2m-g_1+1) (-2n - g_2 + 1)>c_2(E)$$
whenever $m>>0$ is large enough to make $h^0(\Sigma_1, L_1^{-2})= 0$.

Note that the last inequality holds when both $2m-g_1+1>0$  and $-2n - g_2 + 1>0$, hence
$c_2= -2mn >2(2g_1-2)(2g_2-2)$. Thus, it suffices to require $c_2 >8g_1g_2.$
\end{proof}

\section{The Kuranishi map around a nonfiltrable bundle}\label{Kur-nonsplit}


We work with a general surface $\Sigma=  \Sigma_1 \times \Sigma_2$    with    $1<g_1<g_2$.
In such a case $\Sigma$ is a minimal surface of general type with irregularity $q=g_1+g_2$, 
and geometric genus $p_g=g_1g_2$, $K^2=4(g_1-1)(g_2-1)$.

 Here $L = L_1 \boxtimes L_2$ is a line bundle over $\Sigma$ of type $(m,n)$ over 
 $\Sigma_1 \times \Sigma_2$.
We now study the Kuranishi map near an unstable  bundle,
in the most general (and most frequent) case,  namely,
 when $E$ 
is not an extension of line bundles, instead when
$E$ is only an extension of a rank one torsion free sheaf $F$ 
 by a destabilizing line bundle $L$. See  \cite[Ch.\thinspace 2, Prop.\thinspace 5] {Fr} for the proof 
 that over any smooth 
 projective surface, a rank 2 bundle $E$ can be written as such an extension.
 Hence
$$0 \rightarrow L \rightarrow E \rightarrow F \rightarrow 0$$
and 
$$0 \rightarrow F \rightarrow F^{\vee\vee} \rightarrow Q \rightarrow 0$$
with $Q$ supported at points and  $F^{\vee\vee} = L^{-1}.$
The tangent space to $\mathfrak{gl}_2$  deformations of $E$ is 
$$ \Ext^1(L,L) \oplus \Ext^1(L,F) \oplus \Ext^1(F,L) \oplus 
\Ext^1(F,F) $$
and there is a map
$$\mathbb K(E) \hookrightarrow \Ext^1(L,L) \oplus \Ext^1(L,F) \oplus \Ext^1(F,L) \oplus 
\Ext^1(F,F) .$$
 Since 
$\Ext^1(L,L) = \HH^1(\mathcal{O})$ the space of $\mathfrak{sl}_2$ deformations 
of $E$ is 
$$  \Ext^1(L,F) \oplus \Ext^1(F,L) \oplus 
\Ext^1(F,F). $$
Considering only $\mathfrak{sl}_2$ deformations, we may write the map as 
$$\mathbb K(E) \hookrightarrow T_s \oplus T_u \oplus T_o$$
 where we set the 
{\bf notation}:

 $T_s\colon =\Ext^1(L,F) = \HH^1(L^{-1}\otimes F),$ 
 
 $T_u\colon = \Ext^1(F,L)$

 $T_o\colon = \Ext^1(F,F) =\HH^1(\mathcal{O}) \oplus \Gamma \ext^1(F,F).$

\noindent

Note that $T_s$ contains deformations towards stable bundles, whereas both 
$T_u$ and $T_o$ give directions of unstable deformations. $T_o$ is the 
direction corresponding to varying the holomorphic structure of $L$ and
 a lengthy but straightforward calculation shows that its splitting is given by 
\[
\mathrm{Ext}^1(F,F) = \Gamma \ext^1(F,F)
\oplus \HH^1(\mathcal{O}).
\]

 The components of the Kuranishi map are
 \begin{eqnarray*}\label{K2}
\left.\iota\right) \Ext^1(F,L) \otimes \Ext^1(F,F) 
&  \rightarrow & \Ext^2(F,L)\label{7}\\
\left.\iota\iota\right) \Ext^1(L,F) \otimes \Ext^1(F,L)
&  \rightarrow & \Ext^2(F,F)= \HH^2(\mathcal{O})\label{8}\\
\left.\iota\iota\iota\right) \Ext^1(F,F) \otimes \Ext^1(L,F) 
&  \rightarrow & \Ext^2(L,F)= \HH^2(L^{-1}\otimes F)=\HH^2(L^{-2}).\label{9}
 \end{eqnarray*}
In this case we have that $c_2=-mn+l(Q)$ and we proceed to discuss the topology 
when $c_2$ goes to infinity which may happen if 
either $-mn$ or  $l(Q)$ goes to infinity. We will see 
that following  the argument takes us to the same maps and bounds 
obtained in Sec.\thinspace \ref{Kur-split} in either case. 
We now study each component of the Kuranishi map.\\

$\left.\iota\right)$ Consider the map 
$ \Ext^1(F,L)  \otimes  \Ext^1(F,F)\rightarrow \Ext^2(F,L).$

We will use the fact  $\Ext^1(F,F)=\HH^1(\mathcal{O})\oplus\Gamma \ext^1(F,F)$, 
proved in Lem.\thinspace \ref{locglob} of App.\thinspace  \ref{extcalc}; together with the fact that
$\Gamma \ext^1(F,F)= \Gamma \shom(F,Q)= \Hom(F,Q)$
maps to $\Ext^1(F,F)$ since we have the short exact sequence 
$$0 \rightarrow F \rightarrow F^{\vee\vee} \rightarrow Q \rightarrow 0.$$
Taking $\Hom(., L )$, we obtain
$$\Ext^1(Q,L) \rightarrow \HH^1(L^2) \rightarrow \Ext^1(F,L) 
\rightarrow \Ext^2(Q,L) \rightarrow \HH^2(L^2) \rightarrow \Ext^2(F,L)
\rightarrow 0.$$
The map appearing in $\iota)$ can be described by the following three components
\begin{equation}\begin{matrix}\label{ims}
 \Ext^2(Q,L)  & \otimes &  \Hom(F,Q)& & \\
\uparrow& & \downarrow& \searrow & \\
 \Ext^1(F,L) & \otimes &   \Ext^1(F,F)   &\rightarrow & \Ext^2(F,L).\\
\uparrow & & \downarrow & \nearrow & \\
\Ext^1(F^{\vee\vee},L)  & \otimes &  \Ext^1(F, F^{\vee\vee})& 
\end{matrix}
\end{equation}
The diagram commutes, so $\Gamma \ext^1(F,F)$ kills 
$\im\Ext^1(F^{\vee\vee},L)$ and pairs with $\Ext^2(Q,L).$
We are interested in a neighbourhood 
of $[E]$, that is, the class of $E$ in $\Ext^1(F,L)$.
Consider the image of
$[E]$ inside  $\Ext^2(Q,L).$ 
Recall that the non-zero fibers of $Q$ are one-dimensional, because 
$Q$ is a quotient of $\mathcal{O}.$ 
Since $E$ is a vector bundle, it maps to non-zero values in the fibres of
$\Gamma \ext^2(Q,L)$ at all points in the supp($Q$).
Hence, by acting on $\im[E] \in \Ext^2(Q,L)$
with elements of $\Hom(Q,Q)$, we obtain
any element of $\Ext^2(Q,L).$
It follows that 
$\im(\Hom(F,Q) \otimes [E])= \Ext^2(Q,L) \otimes \im\Hom(F,Q))$ in $\Ext^2(F,L)$,
where $\im$ denotes the respective image by the map depicted in diagram\thinspace \ref{ims}.
Moreover, 
as $[E]$ varies, the zero set of the Kuranishi map projected to $\Ext^1(F, F^{\vee\vee})$
forms a vector bundle over its image with fibre Ker$( . \otimes [E]).$ 
In other words, near $[E]$ 
the quadratic part of the Kuranishi map comes entirely from 
the pairing 
$$\im\Ext(L^{-1},L)  \otimes  \Ext^1(L^{-1},L^{-1}) 
\rightarrow \im\Ext^2(L^{-1},L)$$
which is the same as the one for the case of a split bundle case, described in
Sec.\thinspace \ref{Kur-split}, 
and the same conclusions apply. 
Hence, we arrive at the same map as in the case $\iota)$ of Sec.\thinspace \ref{Kur-split}:
$$ \HH^1(\Sigma,L^2)\otimes \HH^1(\Sigma,\mathcal{O})  \rightarrow  \HH^2(\Sigma,L^2).$$

$\left.\iota\iota\right)$
Considering the map $\Ext^1(L,F) \otimes \Ext^1(F,L)
  \rightarrow \HH^2(\mathcal{O})$, we see that the target space 
$\HH^2(\mathcal{O})$ is fixed, so this component imposes the expected bounded number of obstructions, 
which are negligible for large $c_2$.
\vspace{3mm}

$\left.\iota\iota\iota\right)$
Finally we consider the map $$ \Ext^1(F,F) \otimes \Ext^1(L,F) 
  \rightarrow \mathrm{Ext}^2(L,F)= \HH^2(L^{-2}).$$
 We  have
$\HH^2(L^{-2})= \HH^1(\Sigma_1,L_1^{-2} ) \otimes \HH^1(\Sigma_2, 
L_2^{-2})$ which does not vanish. We now try to remove $T_o.$
The argument is analogous to the one in the Sec.\thinspace \ref{Kur-split}, namely this component
  gives obstructions to the Kuranishi map, and is defined on 
$T_o \otimes T_s$  where  $T_s$
contains deformations toward stable bundles. We want to remove 
$T_o$ from the the Kuranishi space at $E$.
The spaces $T_o$ and $T_s$ are determined by the following exact sequences
$$0 \rightarrow \Hom(L,Q) \rightarrow T_s \rightarrow \HH^1(L^{-2} )
\rightarrow 0$$
$$0 \rightarrow \HH^1(\mathcal{O}) \rightarrow T_o \rightarrow \Gamma\ext^1(F,F) 
\rightarrow 0.$$

\begin{claim*} Hom$(L,Q)$ gets killed under the pairing.
\end{claim*}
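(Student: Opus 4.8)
The plan is to split $T_o = \HH^1(\mathcal{O}) \oplus \Gamma\ext^1(F,F)$ and to check, separately for each summand, that it annihilates the subspace $\Hom(L,Q) \subseteq T_s$ under the quadratic part of the Kuranishi component $\iota\iota\iota$. I would use two observations throughout. First, from $0 \to F \to F^{\vee\vee} \to Q \to 0$ the connecting map $\Hom(L,Q) \to \Ext^1(L,F)=T_s$ — which, by the paper's exact sequence $0 \to \Hom(L,Q) \to T_s \to \HH^1(L^{-2}) \to 0$, is injective (once $m\gg0$, since $\Hom(L,F^{\vee\vee})=\HH^0(L^{-2})=0$) with image $\ker\!\big(T_s \to \Ext^1(L,F^{\vee\vee}) = \HH^1(L^{-2})\big)$ — is Yoneda composition with the extension class $\eta \in \Ext^1(Q,F)$; likewise $\Gamma\ext^1(F,F) = \Hom(F,Q) \hookrightarrow \Ext^1(F,F)$ is $\psi \mapsto \eta\circ\psi$. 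Second, since $L$ is a line bundle and $L^{-1}\otimes Q$ is supported on finitely many points, $\Ext^1(L,Q) = \HH^1(X, L^{-1}\otimes Q) = 0$ and, for the same reason, $\HH^1(L^{-1}\otimes Q) = \HH^2(L^{-1}\otimes Q) = 0$.

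For the $\Gamma\ext^1(F,F)$-summand I would show it kills all of $T_s$. Its contribution to the pairing is governed by the composition $\Ext^1(F,F)\otimes\Ext^1(L,F)\to\Ext^2(L,F)$; writing an element of $\Gamma\ext^1(F,F)$ as $\eta\circ\psi$ with $\psi\in\Hom(F,Q)$, its composition with any $v\in\Ext^1(L,F)$ is $\eta\circ(\psi\circ v)$, and $\psi\circ v$ lies in $\Ext^1(L,Q) = 0$. Hence this part of component $\iota\iota\iota$ vanishes identically; in particular it kills $\Hom(L,Q)$.

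For the $\HH^1(\mathcal{O})$-summand, the pairing with $v\in T_s = \Ext^1(L,F) = \HH^1(X, L^{-1}\otimes F)$ is (up to the bracket coefficient) the cup product $v \mapsto \alpha\cup v \in \HH^2(X, L^{-1}\otimes F)$ with $\alpha\in\HH^1(\mathcal{O})$. Set $G \ce L^{-1}\otimes F$, so $G^{\vee\vee} = L^{-2}$ and the quotient $L^{-1}\otimes Q$ is $0$-dimensional; then the natural map $\HH^2(G) \to \HH^2(G^{\vee\vee}) = \HH^2(L^{-2})$ is an isomorphism. Naturality of $\alpha\cup(-)$ with respect to $G\hookrightarrow G^{\vee\vee}$ yields a commuting square whose right vertical arrow is that isomorphism, forcing $\alpha\cup(-)$ on $\HH^1(G)$ to factor through $\HH^1(G)\to\HH^1(G^{\vee\vee})$, whose kernel is exactly the image of $\Hom(L,Q)$. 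Hence the $\HH^1(\mathcal{O})$-part also kills $\Hom(L,Q)$.

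Combining the two cases, the component $\iota\iota\iota$ of the Kuranishi map vanishes on $T_o\otimes\Hom(L,Q)$, which is the claim; one then replaces $T_s$ by $T_s/\Hom(L,Q) = \HH^1(L^{-2})$ and reruns the analysis of Section~\ref{Kur-split}. The main obstacle I anticipate is identifying the relevant component of the quadratic Kuranishi term (cup product followed by the $\mathfrak{sl}_2$-bracket) with the composition products used above — that is, verifying that under the block decomposition of $\End E$ associated to the filtration $0 \subset L \subset E$ the bracket component $T_o\otimes T_s\to\Ext^2(L,F)$ equals (a multiple of) the composition $\Ext^1(F,F)\otimes\Ext^1(L,F)\to\Ext^2(L,F)$ together with the cup action of the scalar summand $\HH^1(\mathcal{O})$; the remaining ingredients (the identification of the connecting maps with $\eta\circ(-)$ and the point-support vanishings above) are routine.
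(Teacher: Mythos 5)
Your proof is correct and follows essentially the same route as the paper: you use the same splitting $T_o = \HH^1(\mathcal{O}) \oplus \Gamma\ext^1(F,F)$, you dispose of the $\Gamma\ext^1(F,F)$ summand by exactly the paper's argument (Yoneda composition factors through $\Ext^1(L,Q)=0$, which is the paper's ``stronger statement''), and your treatment of the $\HH^1(\mathcal{O})$ summand --- naturality of $\alpha\cup(-)$ with respect to $F\hookrightarrow F^{\vee\vee}$ plus the isomorphism $\HH^2(L^{-1}\otimes F)\cong\HH^2(L^{-2})$ --- is only a mild repackaging of the paper's factorization through $\Ext^1(L,Q)=0$, both hinging on the point-support of $Q$. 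The identification of the quadratic Kuranishi term with these composition/cup-product pairings, which you flag as the remaining step, is taken for granted in the paper at the same level of rigor, so no further work is needed.
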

\begin{proof} First consider the map
$\Hom(L,Q) \otimes \HH^1(\mathcal{O}) \rightarrow \Ext^2(L,F).$
Given that $\HH^1(\mathcal{O}) = \Ext^1(L,L)$ the previous map factors
through $\Ext^1(L,Q)$ and the latter is zero because $\ext^1(L,Q)=0$ since 
$L$ is locally free and  $\HH^1(\Hom(L,Q))=0$ because $Q$ is supported at points.
There remains  the map 
$$\Hom(L,Q) \otimes \Gamma \ext^1(F,F) \rightarrow \Ext^2(L,F).$$
The fact that this map is zero follows from the following stronger statement.
\end{proof}

\begin{claim*} $T_s =\Gamma \ext^1(F,F) $ gets killed under the pairing.
\end{claim*}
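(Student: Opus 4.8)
The plan is to show that the Kuranishi component $\iota\iota\iota$, that is, the pairing $\Ext^1(F,F)\otimes\Ext^1(L,F)\to\Ext^2(L,F)$, vanishes identically on the subspace $\Gamma\ext^1(F,F)\subset\Ext^1(F,F)$. Together with the previous claim (that $\Hom(L,Q)\subset T_s$ is annihilated), this leaves as the only surviving quadratic contribution the pairing $\HH^1(\mathcal O)\otimes\HH^1(L^{-2})\to\HH^2(L^{-2})$ of the split case treated in Section~\ref{Kur-split}; the conclusion of that section then applies verbatim and $T_o$ can be removed using Kirwan's Theorem~\ref{kirwan}.

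Recall from the discussion preceding the claim that $\Gamma\ext^1(F,F)=\Gamma\shom(F,Q)=\Hom(F,Q)$ enters $\Ext^1(F,F)$ as the image of the connecting homomorphism $\delta\colon\Hom(F,Q)\to\Ext^1(F,F)$ obtained by applying $\Hom(F,-)$ to the short exact sequence $0\to F\to F^{\vee\vee}\to Q\to 0$, with $F^{\vee\vee}=L^{-1}$. First I would fix a class $\gamma\in\Ext^1(L,F)=T_s$ and record that right Yoneda composition with $\gamma$ is a morphism of cohomological $\delta$-functors $\Ext^{\bullet}(F,-)\to\Ext^{\bullet+1}(L,-)$; hence it commutes (up to sign) with the connecting maps attached to $0\to F\to L^{-1}\to Q\to 0$. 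This yields the identity $\delta(\phi)\circ\gamma=\pm\,\delta'(\phi\circ\gamma)$ for every $\phi\in\Hom(F,Q)$, where now $\phi\circ\gamma\in\Ext^1(L,Q)$ and $\delta'\colon\Ext^1(L,Q)\to\Ext^2(L,F)$ is the connecting map of $\Hom(L,-)$ applied to the same sequence.

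It then remains to observe that $\Ext^1(L,Q)=0$: since $L$ is locally free one has $\ext^1(L,Q)=0$, while $\HH^1(\shom(L,Q))=\HH^1(L^{-1}\otimes Q)=0$ because $Q$ is a finite-length sheaf supported at points, so the local-to-global spectral sequence gives $\Ext^1(L,Q)=0$. Consequently $\phi\circ\gamma=0$ and therefore $\delta(\phi)\circ\gamma=0$ for all $\phi$ and $\gamma$, which is precisely the statement that $\Gamma\ext^1(F,F)$ is killed under the pairing; in particular this settles the residual case $\Hom(L,Q)\otimes\Gamma\ext^1(F,F)\to\Ext^2(L,F)$ left open in the proof of the previous claim. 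I expect the only point requiring care to be the bookkeeping of the order and signs in the Kuranishi quadratic form, i.e.\ matching the "cup product and bracket" description of \cite[Prop.~6.4.3]{DK} with ordinary Yoneda composition in the block decomposition of $\End E$; but the argument only uses that this component factors, after composing with $\gamma$ on the appropriate side, through $\Ext^1(L,Q)$, and since that group vanishes the conclusion is insensitive to the convention.
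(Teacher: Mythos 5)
Your argument is essentially the paper's: the paper likewise identifies $\Gamma\ext^1(F,F)\simeq\Hom(F,Q)$ and kills the pairing by factoring it through $\Ext^1(L,Q)=0$ (using $\ext^1(L,Q)=0$ for $L$ locally free and $\HH^1(\shom(L,Q))=0$ since $Q$ has zero-dimensional support). You merely make the factorization explicit via compatibility of Yoneda composition with the connecting maps of $0\to F\to F^{\vee\vee}\to Q\to 0$, which is a correct and slightly more detailed rendering of the same step.
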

\begin{proof} We want to show that the map 
$$\Ext^1(L,F) \otimes \Gamma \ext^1(F,F)  \rightarrow 
\Ext^2(L,F)$$  is zero. 
But $\Gamma \ext^1(F,F) \simeq \Gamma\shom(F,Q) = \Hom(F,Q)$ hence the domain  becomes
$\Ext^1(L,F) \otimes \Hom(F,Q)$ 
and the map factors through 
$\Ext^1(L,Q)$ which is zero, because $\ext^1(L,Q)=0$.
\end{proof}

The third part of the Kuranishi map in this case gets reduced to the following:
$$\HH^1(\mathcal{O}) \otimes \HH^1(L^{-2}) \rightarrow \Ext^2(L,F) = \HH^2(L^{-2}),$$
but this is the same as  part $\iota\iota\iota$ in the case when $E$ was an extension of line bundles.
The remaining part of the argument proceeds as in Sec.\thinspace \ref{Kur-split}, and once again we obtain the same bounds.

In conclusion, we have showed that each component of the Kuranishi map 
gives the same bounds in this case for the set of unstable points, as we had obtained in Sec.\thinspace \ref{Kur-split}, 
and accordingly, we obtain a result analogous to Prop.\thinspace \ref{split}.

\begin{proposition} \label{nonsplit} Assume $c_2 > 8g_1g_2$. Then, removing the set of unstable  bundles
 does not change the homology of the Kuranishi space 
$\mathbb K_{c_2}(\Sigma, E)$
in dimension smaller than $c_2$.
\end{proposition}

\begin{remark}
Note that our result is independent of the choice of polarization.
Fix a polarization $\omega=(\alpha,\beta)\in \HH^2(\Sigma,\mathbb{Z})$ 
choose $L$ 
which is destabilizing for $E$, then if 
  $L\ce L_1\boxtimes L_2\rightarrow \Sigma$ is a line bundle of type $(m, n)$, 
  we have  $$\mathrm{deg}(L)=\alpha m+\beta n\geq 0.$$
This conclusion holds whether we are in the setting of Sec.\thinspace \ref{Kur-split} or 
Sec.\thinspace \ref{Kur-nonsplit} for any choice of $\omega.$
\end{remark}

Note that in Sec.\thinspace \ref{Kur-nonsplit}
 we have studied the neighborhoods of
nonfiltrable bundles, which also includes the case of filtrable bundles when $Q= \emptyset$ occurs,
and the same bounds apply, since after all, the bounds coincide with those of Sec.\thinspace \ref{Kur-split}.

\section{The topology of the Kuranishi space}

Our main result here concerns a neighborhood of an unstable bundle $E$. 
Note that since stability is an open condition, our homological statement is trivially true for 
a stable bundle $E$.
Otherwise, assume that $E$ has 
 a destabilizing bundle of degree $(m,n)$.  Since here $c_1=0$ and $c_2=-2mn>0$ 
one of $m,n$ must be positive. We will assume that  $m$ is positive.

Observe that
$\HH_0({\mathbb K}_{c_2}
(\Sigma, E), {\mathbb K}^s_{c_2}(\Sigma, E))= \mathbb Z$
if the deformation space of $E$ contains no stable bundles, and $0$ otherwise, 
because both ${\mathbb K}_{c_2}
(\Sigma, E)$ and $ {\mathbb K}^s_{c_2}(\Sigma, E)$ are path connected.

\begin{theorem}\label{main} Let $\Sigma_1$ and $\Sigma_2$ be Pic-independent smooth projective curves.
Let $E$ be bundle on $\Sigma_1\times \Sigma_2$ with trivial determinant 
and with second Chern number  $c_2> 8g_1g_2$. 
Then, for $0<q<c_2$
we have 
$$\HH_q({\mathbb K}_{c_2}
(\Sigma, E), {\mathbb K}^s_{c_2}(\Sigma, E))=0.$$
\end{theorem}

\begin{proof} The Kuranishi 
map around $E$ was studied in Prop.\thinspace \ref{split} for the case of split bundles
and in Prop.\thinspace \ref{nonsplit} for the case of nonfiltrable bundles; in either
case it 
has 3 components which we labelled $\iota$, $\iota\iota$ and $\iota\iota\iota$.

First, consider the case when $\mathbb K_{c_2}(\Sigma,E)$ contains no stable bundles. Then 
the domains of the components $\iota\iota$ and $\iota\iota\iota$ are both zero, because $T_s=0$. 
So, there remains only component $\iota$ whose target 
$\HH^2(\Sigma,L^2) = \HH^1(\Sigma_1,L_1^2) \otimes \HH^1(\Sigma_2,L_2^2)$  vanishes 
whenever  $2m=deg(L^2) > 2g_1-2$.
Therefore,   if
$m > g_i-1$ the Kuranishi map is identically zero in this case, so that 
${\mathbb K}_{c_2}(\Sigma, E)$ is a vector space and the  homology statement follows immediately.

Second, consider the case when the deformation space of $E$ contains stable bundles. 
 We use the results of Sec.\thinspace \ref{Kur-split} and \ref{Kur-nonsplit} to conclude that 
the codimension of the set of unstable bundles   is large and grows with $c_2$. 
Observe that the Kuranishi  space is defined in a neighborhood of a singular  point $E$ 
 by at most $\HH^2(\End_0 E)$ equations.  
The codimension requires fine estimates, 
 which we carried out separately for each component of the Kuranishi map. In fact, we showed in 
 Prop.\thinspace \ref{split} and \ref{nonsplit} that the difference of codimension and number of local defining 
 equations is bounded below by $c_2$. 
 Then, an application of Eq.\thinspace (\ref{kirwan}),
combined with the estimate on the  codimension of the set of unstable bundles for each
 component of the Kuranishi map,
 shows that removing unstable points 
 does not change the  homology  of the Kuranishi deformation space  in  degrees $0<q<c_2$.
 So that ${\mathbb K}_{c_2}(\Sigma, E)$ and $ {\mathbb K}^s_{c_2}(\Sigma, E)$ have the same 
 homology in this range, 
 as we wished to show. 
\end{proof}

We finish with a comment about the bounds on $c_2$.
 For the cases when the deformation space contains stable bundles, 
 we have required	 $m > 2g_1-2$ for components $\iota$ and $\iota\iota$ to obtain
  $\HH^1(\Sigma_1, L_1^2)=0$,  and then for component $\iota\iota\iota$  we asked for 
 $m$  large enough to make $\HH^0(\Sigma_1, L_1^{-2})= 0,$
 which produces the same bound on $m$.
Note that the bound on $m$ needed in case the deformation space has only unstable bundles
is smaller, so it does not influence the result. 
Since $c_2=-2mn$ and the situation is symmetric in $g_1,g_2$, 
we could instead have required $-n > 2g_2-2$.
So, the optimal bound to consider is $c_2= -2mn >2(2g_1-2)(2g_2-2)$. 
Hence, it is enough to require $c_2 >8g_1g_2$.\\

\paragraph{\bf Acknowledgements}  
E. Ballico
 was partially supported by MIUR and GNSAGA of INdAM (Italy).
F. Rubilar was supported by 
Beca Doctorado Nacional Conicyt Folio 21170589.
E. Gasparim and F. Rubilar were also supported by the 
Vicerrector\'\i a de Investigaci\'on y Desarrollo Tecnol\'ogico (UCN Chile).
  B. Suzuki was supported by the ANID-FAPESP cooperation 2019/13204-0.

\appendix

\section{The unstable component}\label{unstable}

In this section we show that an unstable bundle may have an 
entire Zariski open neighborhood consisting of unstable bundles.
This is relevant to our topological estimates, because for such a bundle $E$
the subset $\mathbb K^s(\Sigma, E)$ is empty. 
Alternatively, we may rephrase this in the language of stacks, as 
follows. The moduli stack of rank 2 vector bundles 
on a projective surface may have  entire  components of large dimension
consisting only of unstable bundles. 
Hence, the stack of all bundles may be reducible 
for all (arbitrarily large) values of $c_2,$ even though for large
$c_2$ stable bundles will be contained in a definite irreducible component,
by  results of Gieseker--Li \cite{GL} and O'Grady \cite{OG}. For 
basic properties of stacks see \cite{Go}.

Let $X$ be a smooth and connected projective surface. Fix an ample line bundle $H$ on $X$, a line bundle $R$ on $X$ (it will give our $c_1(E)$)
and a line bundle $L$ on $X$ such
that $2 L\cdot H > R\cdot H$ and $h^0(K _X\otimes R\otimes (L^{\otimes 2})^\vee )=0$. Note that for any fixed $H, R$ these
conditions are satisfied if we take $L =\HH^{\otimes t}$ with $t$ sufficiently large.
 Fix $c_2\in \mathbb {Z}$ such that $c_2 \ge -L^2 +L\cdot R$. Let $A(R,L,c_2)$
be the set of all isomorphism classes of rank 2 bundles $E$ which fit in an exact sequence
\begin{equation}\label{eqa1}
0 \to L \to E \to \mathcal {I}_Q\otimes R\otimes L^\vee \to 0
\end{equation}
with $Q$ a finite subset of $X$ with $\sharp (Q) = c_2+L^2-L \cdot R$. 
Since  $2 L\cdot H > R\cdot H$, each $E\in A(R,L,c_2)$ is slope $H$-unstable and (\ref{eqa1}) is its unique destabilizing filtration. In particular
no two non-proportional extensions (\ref{eqa1}), not even when associated to different sets $Q, Q'$, give isomorphic bundles.

\begin{remark}\label{a1}
	Take $c_2= -L^2 +L\cdot R$. In this case $A(R,L,c_2)$ is the set of all isomorphism classes of extensions of $R\otimes L^\vee$ by $L$.
	If $h^1((L^{\otimes {2}})^\vee\otimes R) =0$, then $A(R,L,c_2) =\{L\oplus R\otimes L^\vee \}$. 
\end{remark}

\begin{remark}\label{a2}
	Assume $ c_2+L^2-R\cdot L > 0$, then for any $Q\subset X$ with $\sharp (Q) =c_2+L^2-L \cdot R$, 
	 the set of all locally free $E$ fitting into (\ref{eqa1}) is
	isomorphic to a non-empty Zariski open subset of a projective space of  positive dimension 
		because any non-empty finite subset $A$ of $X$ has the 
		Cayley--Bacharach property with respect to $R\otimes (L^{\otimes 2})^\vee$ by our choice
	of $L$. Since the set of all subsets of $X$ with cardinality $c_2+L^2-L \cdot R$ is a variety of dimension $2(c_2+L^2-L \cdot R)$
	and each $E\in A(R,L,c_2)$ fits in a unique extension (\ref{eqa1}), up to a scalar multiple, we get $\dim (A(R,L,c_2)) \ge 2(c_2+L^2-L \cdot R)$.
	 Note that $Q$ is supported on zero dimensional scheme. 
\end{remark}

Now we reverse the data, i.e. we assume only that $c_1(R)=c_1(E)$ and $c_2\in \mathbb {Z}$. We fix an integer $a>0$. Fix an integer $t>0$
such that  $h^0(K _X\otimes R\otimes (\HH^{\otimes 2t})^\vee )=0$ and $-t^2H^2 +tH\cdot R \ge c_2+a$. Take $L:= H^{\otimes t}$.
The set $A(R,L,c_2)$ has dimension at least $2a$. 

 \section{Some homological algebra}\label{extcalc}
 The results 
of this appendix are used in Sec.\thinspace \ref{Kur-nonsplit}.

\begin{lemma}\label{locglob} Given that
 $L$ is a line bundle and $Q$ is a zero dimensional 
sheaf such 
$\displaystyle 
0 \rightarrow F \rightarrow L^{-1} \rightarrow Q \rightarrow 0,$ 
there exists 
a canonical splitting
$$ \mbox{\em Ext}^1(F,F) = \Gamma (\ext^1(F,F))
\oplus \HH^1({\mathcal  O}).$$
\end{lemma}

\begin{proof} Since $Q$ is supported at points, 
$\mbox{\em Hom}(Q,L^{-1})= \mbox{\em Ext}^1(Q,L^{-1})=0$.
Applying $\shom(Q, .)$ to the short exact sequence 
\begin{equation}
0 \rightarrow F \rightarrow L^{-1} \rightarrow Q \rightarrow 0, \label{mq}
\end{equation}
we obtain
$$0  \rightarrow 
\shom(Q,Q) \rightarrow
 \ext^1(Q,F)
\rightarrow 0\quad \text{and}$$
$$0  \rightarrow 
\ext^1(Q,Q) \rightarrow
 \ext^2(Q,F) \rightarrow \ext^2(Q,L^{-1}) \rightarrow 
\ext^2(Q,Q) \rightarrow 0.$$

The first part implies 
$\Ext^1(Q,F) = \mbox {Hom}(Q,Q) =  {\Bbb C}^l.$

For the second part, 
the last map is an isomorphism, 
so the penultimate map is zero and it follows that
$\ext^2(Q,F)\simeq \ext^1(Q,Q).$
Consequently,
\begin{equation}\Ext^2(Q,F) = \Ext^1(Q,Q)= {\Bbb C}^{2l}.\label{2l}\end{equation}

Since $\ext^1(L^{-1},F)=\ext^2(L^{-1},F)= \shom(Q,F)=0$, applying $\shom(.,F)$ to the short exact sequence $(\ref{mq})$ 
we obtain
$$ 0 \rightarrow 
\ext^1(F,F) \rightarrow
 \ext^2(Q,F) \rightarrow 0, $$
and combining with the result of (\ref{2l})  gives
\begin{equation} \Ext^1(F,F) \simeq 
 \Ext^1(Q,Q) = {\Bbb C}^{2l}. \label{2l2}\end{equation}
On the other hand, given that 
$\ext^1(F,F) =
 \ext^2(Q,F) $ and 
$\shom(F,Q)=\ext^1(Q,Q) $ the diagram 
$$\begin{matrix}0 &&&&0\cr
\downarrow & & & &\downarrow & && \cr
\shom(F,Q) & \rightarrow & \ext^1(F,F) & \rightarrow &
\ext^1(F,L^{-1}) \cr
 || & &\downarrow & & || & & & \cr
\ext^1(Q,Q) & \simeq &\ext^2(Q,F) & \stackrel{0}{\rightarrow} &
\ext^2(Q,L^{-1}) &  \cr
\end{matrix}$$
gives
$$\shom(F,Q) \simeq \ext^1(F,F). $$
Thus, 
\begin{equation}\Hom(F,Q) = \Gamma \ext^1(F,F).\label{ge}\end{equation}

The results of (\ref{2l2}) and (\ref{ge}) are then plugged into the 
left lower corner of the following  diagram
$$\begin{matrix}
\Hom(Q,Q) & \rightarrow & \Ext^1(Q,F) & \rightarrow &
0 & \rightarrow & \Ext^1(Q,Q) \cr
|| & &\downarrow & &\downarrow & &\downarrow & & \cr
\Hom(L^{-1},Q) & \rightarrow & \Ext^1(L^{-1},F) & \rightarrow &
\Ext^1(L^{-1},L^{-1}) & \rightarrow & 0 \cr
\downarrow & &\downarrow & &\downarrow & &\downarrow & & \cr
\Hom(F,Q) & \rightarrow & \Ext^1(F,F) & \rightarrow &
\Ext^1(F,L^{-1}) & \rightarrow & \Ext^1(F,Q) \cr
 || & &\downarrow & &\downarrow & &\downarrow & & \cr
\Ext^1(Q,Q) & \simeq & \Ext^2(Q,F) & \rightarrow &
\Ext^2(Q,L^{-1}) & \simeq & \Ext^2(Q,Q) \cr
\end{matrix}$$
and a little diagram chase  together with (\ref{ge}) yields\\
$\Ext^1(F,F)= \Ext^1(L^{-1},L^{-1}) \oplus \Hom(F,Q) = 
\HH^1({\mathcal  O}) \oplus \Gamma (\ext^1(F,F)).$
\end{proof}

\end{document}